\theoremstyle{plain} 
\newtheorem{theorem}{\indent\sc Theorem}[section]
\newtheorem{lemma}[theorem]{\indent\sc Lemma}
\theoremstyle{definition} 
\newtheorem{remark}[theorem]{\indent\sc Remark}
\newtheorem{example}[theorem]{\indent\sc Example}
\def\C{{\mathbf{C}}}
\def\R{{\mathbf{R}}}
\def\Pi{{\mathbf{P}}}
\def\Si{{\mathbf{S}}}
\def\RC{{\overline{\mathbf{C}}}} 
\begin{document}

\title[Unicity theorem the Gauss maps]{A note on a unicity theorem for the Gauss maps of complete minimal surfaces in Euclidean four-space} 

\author[P.~H.~Ha]{Pham Hoang Ha} 

\author[Y.~Kawakami]{Yu Kawakami} 


\renewcommand{\thefootnote}{\fnsymbol{footnote}}
\footnote[0]{2010\textit{ Mathematics Subject Classification}.
Primary 53A10; Secondary 30D35, 53C42.}
%
%
\keywords{
minimal surface, Gauss map, unicity theorem.
}
\thanks{ 
The second author is supported by the Grant-in-Aid for Scientific Research (C), 
No. 15K04840, Japan Society for the Promotion of Science.}
\address{
Department of Mathematics,  \endgraf
Hanoi National University of Education, \endgraf
136, XuanThuy str., Hanoi, \endgraf
Vietnam
}
\email{ha.ph@hnue.edu.vn}

\address{
Faculty of Mathematics and Physics, \endgraf
Institute of Science and Engineering, \endgraf 
Kanazawa University, \endgraf
Kanazawa, 920-1192, Japan
}
\email{y-kwkami@se.kanazawa-u.ac.jp}


\maketitle

\begin{abstract}
The classical result of Nevanlinna states that two nonconstant meromorphic functions on the complex plane having the 
same images for five distinct values must be identically equal to each other. In this paper, we give a similar uniqueness theorem 
for the Gauss maps of complete minimal surfaces in Euclidean four-space. 
\end{abstract}

\section{Introduction} 
The Gauss map of a complete minimal surface in Euclidean space have some properties similar to 
the results in value distribution theory of a meromorphic function on the complex plane ${\C}$. One of the most notable results 
in this area is the Fujimoto theorem \cite[Theorem I]{Fu1988} which states that the Gauss map of a nonflat complete minimal surface 
in Euclidean $3$-space ${\R}^{3}$ can omit at most $4$ values. 
He also obtained the sharp estimate \cite[Theorem I\hspace{-.1em}I]{Fu1988} for the number of exceptional values of the Gauss map 
of a complete minimal surface in Euclidean $4$-space ${\R}^{4}$. Recently, 
the second author \cite{Ka2013} (for ${\R}^{3}$) and Aiyama, Akutagawa, Imagawa and the second author \cite{AAIK2016} (for ${\R}^{4}$) 
gave geometric interpretations of these Fujimoto results. 
Moreover Dethloff and the first author \cite{DH2014} proved ramification theorems for the Gauss map of complete minimal surfaces in 
${\R}^{3}$ and ${\R}^{4}$ on annular ends. Their results extended a result of Kao \cite{Ka1991}. 

Another famous result is the result on uniqueness and value sharing, which is called the unicity theorem. 
For meromorphic functions on $\C$, Nevanlinna \cite{Ne1926} proved 
that two meromorphic functions on $\C$ sharing $5$ distinct values must be identically equal to each other. 
Here we say that two meromorphic functions (or maps) $f$ and $\hat{f}$ share the value $\alpha$ (ignoring multiplicity) when 
$f^{-1}(\alpha) =\hat{f}^{-1}(\alpha)$. 
Fujimoto \cite{Fu1993-2} obtained the following analogy of this theorem for the Gauss maps of complete minimal surfaces in ${\R}^{3}$: 

\begin{theorem}\label{thm-Fujimoto-3}\cite[Theorem I]{Fu1993-2}
Let $X\colon \Sigma \to {\R}^{3}$ and $\widehat{X}\colon \widehat{\Sigma}\to {\R}^{3}$ be two nonflat minimal surfaces and 
$g\colon \Sigma \to \RC :=\C\cup \{\infty \}$, $\hat{g}\colon \widehat{\Sigma}\to \RC$ the Gauss maps of $X(\Sigma)$, 
$\widehat{X}(\widehat{\Sigma})$ respectively. Assume that there exists a conformal diffeomorphism $\Psi\colon \Sigma \to 
\widehat{\Sigma}$ and either $X(\Sigma)$ or $\widehat{X}(\widehat{\Sigma})$ is complete. 
If $g$ and $\hat{g}\circ \Psi$ share $7$ distinct values, then $g\equiv \hat{g}\circ \Psi$. 
\end{theorem}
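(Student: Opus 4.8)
The plan is to follow Fujimoto's method: transport everything to a single Riemann surface, suppose the two Gauss maps differ, and build from the shared-value data an auxiliary conformal metric whose negative curvature is incompatible with completeness. First I would use the conformal diffeomorphism $\Psi$ to pull $\hat g$ back to $\Sigma$; writing $\hat g$ for $\hat g\circ\Psi$ and passing to local holomorphic coordinates, the Weierstrass representation gives
$$ ds^2=(1+|g|^2)^2|\omega|^2,\qquad d\hat s^2=(1+|\hat g|^2)^2|\hat\omega|^2 $$
for holomorphic one-forms $\omega,\hat\omega$, with $g,\hat g$ meromorphic on $\Sigma$. Since the sharing hypothesis is symmetric, I may assume that $ds^2$ is the complete one. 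Suppose toward a contradiction that $g\not\equiv\hat g$, and let $a_1,\dots,a_7\in\RC$ be the shared values, which after a Möbius change of target I take to be finite and distinct. The sharing condition $g^{-1}(a_j)=\hat g^{-1}(a_j)$ says that at every point where $g$ attains some $a_j$ the function $\hat g$ attains the same $a_j$; hence the nonzero meromorphic function $g-\hat g$ vanishes on all of $\bigcup_{j} g^{-1}(a_j)$. This single analytic fact is what replaces the second main theorem in the classical Nevanlinna unicity proof.

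Next I would manufacture the auxiliary metric. The model is that $\RC$ minus $q\ge 3$ points carries a complete metric of strictly negative curvature, and that its pullback interacts with the conformal factor $(1+|g|^2)^2$. Concretely, on $\Sigma$ with the zeros of $\omega,\hat\omega$ and $g-\hat g$ and the branch points of $g,\hat g$ deleted, I would take a metric of the schematic form
$$ d\tau^2=|g-\hat g|^{2p}\prod_{j=1}^{7}|g-a_j|^{-2q_j}|\hat g-a_j|^{-2\hat q_j}\cdot W\,|dz|^2, $$
where $W$ collects the Weierstrass factors coming from $\omega,\hat\omega$, and the exponents $p,q_j,\hat q_j>0$ are chosen so that simultaneously (i) the Gaussian curvature of $d\tau^2$ is bounded above by $-1$ off the deleted set, and (ii) near each deleted point $d\tau^2$ dominates a fixed multiple of $ds^2$, so that lengths with respect to $ds^2$ are controlled by lengths with respect to $d\tau^2$. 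It is exactly the arithmetic of solving (i) and (ii) at once — balancing the two factors $(1+|g|^2)^2,(1+|\hat g|^2)^2$ against the seven \emph{pairs} of shared-value factors and the zeros of $g-\hat g$ — that forces the number of shared values to be at least $7$; for $q\le 6$ no admissible exponents exist, and this is where the constant in the statement originates.

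Finally I would extract the contradiction by Fujimoto's completeness argument. Take a divergent path in $\Sigma$; after restricting to a suitable end and mapping it conformally onto a disk $\Delta_R$, the Ahlfors--Schwarz lemma applied to $d\tau^2$ (whose curvature is $\le -1$) bounds $d\tau^2$ from above by the Poincaré metric of $\Delta_R$. Combining this bound with the comparison (ii), which gives $ds^2\le C\,d\tau^2$, one produces a divergent curve in $\Sigma$ of \emph{finite} $ds^2$-length. This contradicts the completeness of $ds^2$. Hence the assumption $g\not\equiv\hat g$ is untenable, and therefore $g\equiv\hat g$, that is, $g\equiv\hat g\circ\Psi$.

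The hard part will be step (ii): the auxiliary metric $d\tau^2$ is genuinely singular at the zeros of $g-\hat g$ and at the branch points of $g$ and $\hat g$, and the naive metric fails to be complete there. Verifying that the singular exponents can be arranged so that $d\tau^2$ both curves negatively and still controls $ds^2$-lengths in the right direction — and checking that the threshold $q=7$ is precisely what makes this possible — is the delicate core of the argument. By comparison, reducing the values $a_j=\infty$ to the finite case, and tracking the interaction of the two Weierstrass data $\omega,\hat\omega$ under $\Psi$, are routine once the exponent bookkeeping is in place.
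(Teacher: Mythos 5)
Your overall strategy --- assume $g\not\equiv\hat g\circ\Psi$, build from the seven shared values an auxiliary pseudo-metric of negative curvature, compare it with the Poincar\'e metric, and contradict completeness by exhibiting a divergent curve of finite $ds^{2}$-length --- is exactly Fujimoto's, and it is the same machinery this paper runs in Section \ref{Sec-Proof} to prove the more general Theorem \ref{thm-main} (the statement at hand is the case $n=1$, $m=2$ of that theorem, since $ds^{2}=(1+|g|^{2})^{2}|\omega|^{2}$; the inequality $m/(q-4)\geq 1$ then reads $q\leq 6$, so seven shared values force identity). However, there is a genuine gap at the pivot of your third step: you propose to ``restrict to a suitable end and map it conformally onto a disk $\Delta_{R}$.'' No such conformal identification is available for an arbitrary end of an open Riemann surface, and the divergent path is not given in advance --- it must be \emph{produced}. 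The actual mechanism is to introduce a second, \emph{flat} auxiliary metric $d\sigma^{2}$ on $\Sigma'=\Sigma\setminus E$ (the analogue of (\ref{eq-proof-2-7})) and to invoke the developing-map lemma (Lemma \ref{Lem-main2}, Fujimoto's Lemma 1.6.7), which simultaneously furnishes a local isometry $\Phi\colon\Delta_{R}\to\Sigma'$ and a radius $L_{a_{0}}$ whose $\Phi$-image is divergent in $\Sigma'$; the negatively curved pseudo-metric of Lemma \ref{main-lem1} then forces $R<\infty$, and the Schwarz-type bound of Lemma \ref{main-lem2} gives the length estimate. Your single metric $d\tau^{2}$ cannot play both roles: the metric used to manufacture the disk must be flat, not negatively curved.

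A second missing step: once the disk and the curve $\Gamma_{a_{0}}$ divergent in $\Sigma'$ are in hand, one must still check that $\Gamma_{a_{0}}$ is divergent in $\Sigma$ itself, i.e.\ that it cannot converge to a deleted point of $E$ (a zero of $g'$ or $\hat g'$, or a shared-value point). This is the two-case analysis in the proof of Theorem \ref{thm-main}: the exponents are tuned via (\ref{eq-proof-2-6}) so that $d\sigma^{2}$ has infinite length along any path approaching $E$, contradicting $R<\infty$. Your conditions (i) and (ii) omit this third constraint, and it is precisely the tension among all three that produces the threshold. Finally, the ``delicate core'' you explicitly defer --- exhibiting admissible exponents and verifying that they exist exactly when the number of shared values is at least $7$ --- is the entire quantitative content of the theorem; as written, the proposal is a correct road map of Fujimoto's argument rather than a proof of it.
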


We remark that the second author \cite{Ka2015} had unified explanation for the unicity theorem of the Gauss maps of several classes 
of surfaces in $3$-dimensional space forms including minimal surfaces in ${\R}^{3}$. 

The purpose of this paper is to give a similar uniqueness theorem for the Gauss maps of complete minimal surfaces in ${\R}^{4}$. 
The main theorem is stated as follows: 

\begin{theorem}\label{thm-cor}
Let $X\colon \Sigma \to {\R}^{4}$ and $\widehat{X}\colon \widehat{\Sigma} \to {\R}^{4}$ be two nonflat minimal surfaces, 
and $G=(g_{1}, g_{2})\colon \Sigma \to \RC\times \RC$, $\widehat{G}=(\hat{g}_{1}, \hat{g}_{2})\colon \widehat{\Sigma} 
\to \RC\times \RC$ the Gauss maps of $X(\Sigma)$, $\widehat{X}(\widehat{\Sigma})$ respectively. 
We assume that there exists a conformal diffeomorphism $\Psi \colon \Sigma \to \widehat{\Sigma}$ and 
either $X(\Sigma)$ or $\widehat{X}(\widehat{\Sigma})$ is complete. 
\begin{enumerate}
\item[(i)] Assume that $g_{1}, g_{2}, \hat{g}_{1}, \hat{g}_{2}$ are nonconstant and, for each $i$ $(i=1, 2)$, 
$g_{i}$ and $\hat{g}_{i}\circ \Psi$ share $p_{i} >4$ distinct values. 
If $g_{1}\not\equiv \hat{g}_{1}\circ \Psi$ and $g_{2}\not\equiv \hat{g}_{2}\circ \Psi$, then we have
\begin{equation}\label{eq-2-2}
\dfrac{1}{p_{1}-4}+\dfrac{1}{p_{2}-4}\geq 1. 
\end{equation}
In particular, if $p_{1}\geq 7$ and $p_{2}\geq 7$, then either $g_{1}\equiv \hat{g}_{1}\circ \Psi$ or 
$g_{2}\equiv \hat{g}_{2}\circ \Psi$, or both hold. 
\item[(i\hspace{-.1em}i)] Assume that $g_{1}, \hat{g}_{1}$ are nonconstant, 
and $g_{1}$ and $\hat{g}_{1}\circ \Psi$ share $p$ distinct values. 
If $g_{1}\not\equiv \hat{g}_{1}\circ \Psi$ and $g_{2}\equiv \hat{g}_{2}\circ \Psi$ is constant, then we have $p\leq 5$. 
In particular, if $p\geq 6$, then $G\equiv \widehat{G}\circ \Psi$.
\end{enumerate}
\end{theorem}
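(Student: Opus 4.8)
The plan is to transplant Fujimoto's curvature method for unicity to the Weierstrass structure of minimal surfaces in $\R^4$. By the Weierstrass representation, after choosing a local complex coordinate $z$ the induced metric of a nonflat minimal surface in $\R^4$ with Gauss map $(g_1,g_2)$ has the form
$$ds^2 = |\phi|^2\,(1+|g_1|^2)(1+|g_2|^2)\,|dz|^2,$$
with $\phi$ holomorphic and nonvanishing where the $g_i$ are finite; completeness of $X(\Sigma)$ means this metric is complete. Replacing $\Psi$ by $\Psi^{-1}$ if necessary, I would assume that $X(\Sigma)$ is the complete surface, and I would pull everything back by $\Psi$, writing $\hat{g}_i$ for $\hat{g}_i\circ\Psi$, so that all data live on the single complete Riemann surface $\Sigma$.

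For part (i) I would argue by contradiction, assuming $g_i\not\equiv\hat{g}_i$ for $i=1,2$, that they share values $\alpha^{(i)}_1,\dots,\alpha^{(i)}_{p_i}$, and that $\tfrac{1}{p_1-4}+\tfrac{1}{p_2-4}<1$. The key observation is that sharing forces the nonzero meromorphic function $g_i-\hat{g}_i$ to vanish at every common preimage of the $\alpha^{(i)}_j$; this converts ``sharing $p_i$ values'' into a ramification-type deficiency for the pair. I would then build an auxiliary conformal pseudo-metric
$$d\tau^2 = \Big(\prod_{i=1,2}\ |g_i-\hat{g}_i|^{2a_i}\,\frac{(1+|g_i|^2)^{b_i}(1+|\hat{g}_i|^2)^{b_i}}{\prod_{j=1}^{p_i}\big(|g_i-\alpha^{(i)}_j|\,|\hat{g}_i-\alpha^{(i)}_j|\big)^{2c_i}}\Big)\,|dz|^2,$$
choosing the exponents $a_i,b_i,c_i$ (depending on $p_i$) so that two things hold simultaneously: the Gaussian curvature of $d\tau^2$ is bounded above by a negative constant, and $d\tau^2$ dominates $ds^2$ in every divergent direction, so that completeness of $ds^2$ is inherited by $d\tau^2$. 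Since a complete open surface carrying a conformal metric with curvature bounded above by a negative constant cannot exist (the Fujimoto-type lemma underlying the $\R^3$ and $\R^4$ results above), this is the desired contradiction. The admissible range of exponents in the curvature computation is governed precisely by the numbers $p_i-4$, and the constraint that makes the construction possible collapses to $\tfrac{1}{p_1-4}+\tfrac{1}{p_2-4}\ge 1$; the ``in particular'' statement then follows because $p_1,p_2\ge 7$ would give $\tfrac13+\tfrac13<1$.

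For part (ii) I would specialize: since $g_2\equiv\hat{g}_2$ is a constant $c$, the metric reduces, up to the positive constant $1+|c|^2$, to $ds^2=|\phi|^2(1+|g_1|^2)|dz|^2$, that is, a complete metric in which the factor $(1+|g_1|^2)$ appears to the first power rather than the square that occurs for genuine surfaces in $\R^3$. Running the one-variable version of the above construction with $g_1-\hat{g}_1$ and its $p$ shared values, the threshold produced by the curvature estimate is shifted by exactly this change of exponent, yielding $p\le 5$; hence $p\ge 6$ forces $g_1\equiv\hat{g}_1$, and together with $g_2\equiv\hat{g}_2\circ\Psi$ this gives $G\equiv\widehat{G}\circ\Psi$.

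The main obstacle I anticipate is the joint calibration of the exponents in $d\tau^2$: the curvature-negativity computation and the completeness-domination requirement pull the exponents in opposite directions, and it is the sharp window between them that encodes the inequality. Additional care is needed at the common preimages where $g_i=\hat{g}_i=\alpha^{(i)}_j$, where numerator and denominator factors vanish together and one must verify that $d\tau^2$ remains a bona fide (continuous, possibly singular) metric with the curvature bound intact; and the nonflatness hypothesis must be invoked to guarantee that the constructed objects are not identically degenerate.
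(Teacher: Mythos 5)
Your overall framework --- Fujimoto's curvature method applied to the $\R^{4}$ Weierstrass data $ds^{2}=(1+|g_{1}|^{2})(1+|g_{2}|^{2})|\omega|^{2}$, with a window of admissible exponents encoding $\tfrac{1}{p_{1}-4}+\tfrac{1}{p_{2}-4}\geq 1$ and the threshold in part (ii) dropping because one factor $(1+|g|^{2})$ disappears --- is the right one, and matches the paper, which proves a weighted general statement (Theorem \ref{thm-main}) and specializes to $m_{1}=m_{2}=1$. But your concluding step contains a genuine gap: you finish by asserting that a complete open surface carrying a conformal metric with curvature bounded above by a negative constant cannot exist. That is false --- the unit disk with the Poincar\'e metric is exactly such a surface --- so arranging that $d\tau^{2}$ dominates $ds^{2}$ and inherits completeness produces no contradiction. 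The nonexistence statement actually available is much weaker: there is no conformal pseudo-metric with curvature $\leq -1$ on all of $\C$, and in the correct argument this is used only to show that a certain Euclidean radius $R$ is finite, not to contradict completeness directly.

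The paper's proof is built precisely to get around this, and its final inequality points in the opposite direction from yours. One constructs an auxiliary \emph{flat} metric $d{\sigma}^{2}$ (the modulus of a holomorphic expression) on $\Sigma\setminus E$, uses Lemma \ref{Lem-main2} to realize it locally as a Euclidean disk ${\Delta}_{R}$ carrying a divergent image curve ${\Gamma}_{a_{0}}$, concludes $R<\infty$ from the existence of the negatively curved comparison metrics $d\tau^{2}_{i_{l}}$ of Lemma \ref{main-lem1}, and then, via the Schwarz-type estimate of Lemma \ref{main-lem2}, bounds $ds^{2}$ \emph{from above} by $C\bigl(R^{2}/(R^{2}-|z|^{2})\bigr)^{\Lambda}|dz|^{2}$ with $\Lambda<1$. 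It is the integrability of this bound along the radius --- possible only because $\Lambda<1$, which is exactly where the assumed failure of the stated inequality enters --- that makes ${\Gamma}_{a_{0}}$ a divergent curve of finite $ds$-length and contradicts completeness. Two further points you would need to repair even within your own scheme: your $d\tau^{2}$ omits the logarithmic factors $\log\bigl(a_{0}/|g,\alpha|^{2}\bigr)$ of Lemma \ref{main-lem1}, without which the curvature is negative but not bounded away from zero near the shared-value preimages; and one must rule out that the divergent curve in $\Sigma\setminus E$ terminates at a zero of $g_{i_{l}}'$ or $\hat{g}_{i_{l}}'$ or at a shared-value preimage, which the paper handles by showing the $d\sigma$-length would then be infinite.
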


The paper is organized as follows: In Section \ref{Sec-Main}, to reveal the geometric interpretation of Theorem \ref{thm-cor}, 
we give a unicity theorem for the holomorphic map $G=(g_{1}, \ldots, g_{n})$ into 
$(\RC)^{n}:=\underbrace{\RC\times \cdots \times \RC}_{n}$ on open Riemann surfaces with the conformal metric 
$ds^{2}=\prod_{i=1}^{n}(1+|g_{i}|^{2})^{m_{i}}|\omega|^{2}$, where $\omega$ is a holomorphic $1$-form on $\Sigma$ 
and each $m_{i}$ $(i=1, \cdots, n)$ is a positive integer (Theorem \ref{thm-main}). 
By virtue of the result, Theorem \ref{thm-cor} deeply depends on the induced metric from ${\R}^{4}$. 
Moreover we give examples (Example \ref{thm-exe}) which ensure that Theorem \ref{thm-cor} is optimal. 
The proof and some remarks of Theorem \ref{thm-cor} are given in the latter of this section. 
Section \ref{Sec-Proof} provides the proof of Theorem \ref{thm-main}. The main idea of the proof is to 
construct some flat pseudo-metric on $\Sigma$ and compare it with the Poincar\'e metric. 

Finally, the authors would like to thank Professor Yasuhiro Nakagawa for his useful comments. 


\section{Main results}\label{Sec-Main}
To elucidate the geometric interpretation of Theorem \ref{thm-cor}, we give the following theorem. 

\begin{theorem}\label{thm-main}
Let $\Sigma$ be an open Riemann surface with the conformal metric 
$$
ds^{2}=\displaystyle \prod_{i=1}^{n}(1+|g_{i}|^{2})^{m_{i}}|\omega|^{2} 
$$
and $\widehat{\Sigma}$ another open Riemann surface with the conformal metric 
$$
d\hat{s}^{2}= \displaystyle \prod_{i=1}^{n}(1+|\hat{g}_{i}|^{2})^{m_{i}}|\hat{\omega}|^{2}, 
$$
where $\omega$ and $\hat{\omega}$ are holomorphic $1$-forms, $G$ and $\widehat{G}$ are holomorphic maps 
into $(\RC)^{n}:=\underbrace{\RC\times \cdots \times \RC}_{n}$ on $\Sigma$ and $\widehat{\Sigma}$ respectively, 
and each $m_{i}$ $(i=1, \cdots, n)$ is a positive integer. 
We assume that there exists a conformal diffeomorphism $\Psi \colon \Sigma \to 
\widehat{\Sigma}$, and $g_{i_{1}}, \ldots, g_{i_{k}}$ and $\hat{g}_{i_{1}}, \ldots, \hat{g}_{i_{k}}$ $(1\leq i_{1}< \cdots < i_{k}\leq n)$ are 
nonconstant and the others are constant. For each $i_{l}$ $(l=1, \cdots, k)$, we suppose that 
$g_{i_{l}}$ and $\hat{g}_{i_{l}}\circ \Psi$ share $q_{i_{l}}> 4$ distinct values and $g_{i_{l}} \not\equiv \hat{g}_{i_{l}}\circ \Psi$.  
If either $ds^{2}$ or $d\hat{s}^{2}$ is complete, then we have 
\begin{equation}\label{eq-2-1}
\displaystyle \sum_{l=1}^{k} \dfrac{m_{i_{l}}}{q_{i_{l}}-4}\geq 1. 
\end{equation}
\end{theorem}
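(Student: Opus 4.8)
The plan is to argue by contradiction along the lines of Fujimoto's proof of Theorem \ref{thm-Fujimoto-3}, producing a complete flat pseudo-metric on $\Sigma$ and comparing it with the Poincar\'e metric. Suppose the conclusion fails, so that $\sum_{l=1}^{k} m_{i_l}/(q_{i_l}-4)<1$ while each $g_{i_l}\not\equiv\hat g_{i_l}\circ\Psi$; by the symmetry of the value-sharing hypothesis and of the two metrics I may assume that $ds^{2}$ is complete. Using $\Psi$ I first transport all the data to the single surface $\Sigma$: I replace $\hat g_{i_l}$ by $\hat g_{i_l}\circ\Psi$ and $\hat\omega$ by $\Psi^{*}\hat\omega$ (still holomorphic, since $\Psi$ is conformal), so that $G$, $\widehat G$, the forms $\omega,\hat\omega$ and both metrics live on $\Sigma$. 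After a M\"obius change of coordinate in each $\RC$-factor I may also assume that none of the shared values $\alpha_{i_l}^{1},\dots,\alpha_{i_l}^{q_{i_l}}$ is $\infty$; the constant factors $(1+|g_i|^2)^{m_i}$ only rescale the metrics by positive constants and play no role.

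The first substantive step is to manufacture holomorphic data out of the value-sharing hypothesis. For each nonconstant index $i_l$ I would form an expression of the type \[ \frac{(g_{i_l}-\hat g_{i_l})\,dg_{i_l}}{\prod_{j=1}^{q_{i_l}}(g_{i_l}-\alpha_{i_l}^{j})}, \] together with the analogous factors in $\hat g_{i_l}$ needed to tame its poles. The key point, and the only place value sharing is used, is a local order count: at a point where $g_{i_l}$ attains a shared value $\alpha_{i_l}^{j}$ with multiplicity $\mu$, the factor $dg_{i_l}$ vanishes to order $\mu-1$ while $g_{i_l}-\hat g_{i_l}$ vanishes to order at least $1$ (because $\hat g_{i_l}=\alpha_{i_l}^{j}$ there as well), so the numerator exactly absorbs the order-$\mu$ pole of $\prod_{j}(g_{i_l}-\alpha_{i_l}^{j})$; using $q_{i_l}>4\ge 3$ one checks in the same way that the poles of $g_{i_l}$ do no harm. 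Hence these expressions define genuine holomorphic $1$-forms on $\Sigma$.

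Next I would assemble these forms together with $\omega$ into a flat pseudo-metric of the shape \[ d\tau^{2}=|\omega|^{2}\prod_{l=1}^{k}\Big|\frac{(g_{i_l}-\hat g_{i_l})\,dg_{i_l}/\omega}{\prod_{j}(g_{i_l}-\alpha_{i_l}^{j})}\Big|^{2\beta_{l}}, \] with positive exponents $\beta_{l}$ depending on $m_{i_l}$ and $q_{i_l}$. Since each factor is a power of the modulus of a holomorphic function, $d\tau^{2}$ is flat off a discrete set; the point is that the exponents $\beta_{l}$ can be chosen admissibly precisely because $\sum_{l} m_{i_l}/(q_{i_l}-4)<1$. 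I would then show that $d\tau^{2}$ is complete, deducing this from the completeness of $ds^{2}$ by means of the order estimates above, which control the conformal factor relating $d\tau^{2}$ and $ds^{2}$ along divergent paths toward the ideal boundary.

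Finally I would reach a contradiction by comparison with the Poincar\'e metric. A complete flat metric forces the universal cover of $\Sigma$ to be conformally $\C$, with $d\tau^{2}$ pulling back to the Euclidean $|dw|^{2}$. On the other hand, the value-sharing data, through the Ahlfors--Schwarz lemma, compares $d\tau^{2}$ with the complete, negatively curved Poincar\'e metric of $\RC$ minus three of the shared values; since a holomorphic map $\C\to\RC$ omitting three points is constant, this forces an algebraic relation among $g_{i_l}$, $\hat g_{i_l}$ and the $\alpha_{i_l}^{j}$ incompatible with $g_{i_l}$ being nonconstant and satisfying $g_{i_l}\not\equiv\hat g_{i_l}$. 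This contradiction establishes \eqref{eq-2-1}. I expect the main obstacle to lie in the middle steps: selecting the exponents $\beta_{l}$ so that $d\tau^{2}$ is simultaneously flat and complete, and carrying out the global order bookkeeping at the poles of $g_{i_l},\hat g_{i_l}$ and the zeros of $\omega$ so that completeness genuinely transfers from $ds^{2}$. This is exactly where the numerical hypothesis must be used sharply, and where the product structure of the metric (the several weights $m_{i_l}$) makes the estimate more delicate than in the single-map situation of Theorem \ref{thm-Fujimoto-3}.
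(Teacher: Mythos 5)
Your overall strategy --- argue by contradiction, build an auxiliary flat pseudo-metric out of the value-sharing data, and play it off against a negatively curved one --- is the right family of ideas, and your local order count at a shared value (the numerator $(g_{i_l}-\hat g_{i_l})\,dg_{i_l}$ absorbing the zero of $\prod_j(g_{i_l}-\alpha^{j}_{i_l})$) is exactly where value sharing enters. But the final step of your plan has a genuine gap that I do not think can be repaired as stated. You propose to show that the flat pseudo-metric $d\tau^{2}$ is \emph{complete} and conclude that the universal cover is conformally $\C$. Completeness of $d\tau^{2}$ does not follow from completeness of $ds^{2}$: the density of $d\tau^{2}$ carries the factor $|g_{i_l}-\hat g_{i_l}|^{2\beta_l}$, which vanishes at every coincidence point of $g_{i_l}$ and $\hat g_{i_l}$ outside the shared-value preimages and, more seriously, can be small on large regions near the ideal boundary of $\Sigma$; there is no pointwise lower bound $d\tau^{2}\geq c\,ds^{2}$, so a $ds^{2}$-divergent path may have finite $d\tau^{2}$-length. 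The paper's argument runs in the opposite direction: it applies Fujimoto's Lemma \ref{Lem-main2} to the (not necessarily complete) flat metric $d\sigma^{2}$ on $\Sigma'=\Sigma\setminus E$ to get a local isometry $\Phi\colon\Delta_{R}\to\Sigma'$ sending some radius to a divergent curve, proves $R<+\infty$ by exhibiting a strictly negatively curved pseudo-metric on $\Delta_{R}$, and then shows that this divergent curve has \emph{finite} $ds^{2}$-length --- the contradiction with completeness of $ds^{2}$, not with flatness.

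The engine that makes the length estimate work is also missing from your sketch. The comparison is not with the Poincar\'e metric of $\RC$ minus three of the shared values (an omitted-value device that does not see value sharing at all), but with Fujimoto's pseudo-metrics $d\tau^{2}_{i_l}$ of Lemma \ref{main-lem1}, built from the chordal distance $|g_{i_l},\hat g_{i_l}|$ and from chordal distances to \emph{all} $q_{i_l}$ shared values together with the logarithmic weights $\log(a^{l}_{0}/|g_{i_l},\alpha^{l}_{j}|^{2})$, and with the Schwarz-type bound $d\tau^{2}_{i_l}\leq C R^{2}(R^{2}-|z|^{2})^{-2}|dz|^{2}$ of Lemma \ref{main-lem2}. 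It is these, combined with the boundedness of $x^{\varepsilon}\log^{1-\varepsilon}(a^{l}_{0}/x)$ and the precise choice of exponents ${\lambda}_{i_l}=m_{i_l}/(q_{i_l}-4-q_{i_l}\eta)$ with $\Lambda=\sum_{l}{\lambda}_{i_l}<1$, that yield ${\Phi}^{\ast}ds^{2}\leq C\bigl(R^{2}/(R^{2}-|z|^{2})\bigr)^{\Lambda}|dz|^{2}$ and hence finite length of the divergent curve. One must also check, as in the paper's Cases 1 and 2, that the exponents force $d\sigma^{2}$ to blow up to order greater than one at the points of $E$, so that ${\Gamma}_{a_{0}}$ cannot converge to a point of $E$ and is genuinely divergent in $\Sigma$ --- bookkeeping your sketch does not address. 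In short, you have identified the right objects but inverted the logic at the decisive step and omitted the two lemmas that carry the estimate.
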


We remark that Theorem \ref{thm-main} also holds for the case where at least one of $m_{1}, \ldots, m_{n}$ is positive 
and the others are zeros. For example, we assume that $g:= g_{i_{1}}$ and $\hat{g}:= \hat{g}_{i_{1}}$ are nonconstant and the others 
are constant. If $m:= m_{i_{1}}$ is a positive integer and the others are zeros, then the inequality (\ref{eq-2-1})  coincides with 
$$
\dfrac{m}{q-4}\geq 1 \, \Longleftrightarrow  \, q \leq m+4,  
$$
where $q:= q_{i_{1}}$. The result corresponds with \cite[Theorem 2.9]{Ka2015}.  

Theorem \ref{thm-main} is optimal because there exist the following examples. 

\begin{example}\label{thm-exe}
For positive integers $m_{1}, \ldots, m_{n}$ whose the sum $M:=\sum_{l=1}^{k}m_{i_{l}}$ of 
the subset $\{i_{1}, \ldots, i_{k}\}$ in $\{ 1, \cdots, n  \}$ is even, we take $M/2$ distinct points 
${\alpha}_{1}, \ldots, {\alpha}_{M/2}$ in $\C\backslash \{0, \pm 1\}$. Let $\Sigma$ be either the complex 
plane punctured at $M+1$ distinct points $0, {\alpha}_{1}, \ldots, {\alpha}_{M/2}, 1/{\alpha}_{1}, \ldots, 1/{\alpha}_{M/2}$ or 
the universal covering of the punctured plane. We set that 
$$
\omega =\dfrac{dz}{z\prod_{i=1}^{M/2}(z-{\alpha}_{i})({\alpha}_{i}z-1)}
$$
and, the map $G=(g_{1}, \cdots , g_{n})$ is given by 
$$
g_{i_{1}}= \cdots =g_{i_{k}}= z \quad (1\leq i_{1}< \cdots < i_{k}\leq n)
$$
and the others are constant. In a similar manner, we set 
$$
\hat{\omega}(=\omega) =\dfrac{dz}{z\prod_{i=1}^{M/2}(z-{\alpha}_{i})({\alpha}_{i}z-1)}
$$
and, the map $\widehat{G}=(\hat{g}_{1}, \cdots , \hat{g}_{n})$ is given by 
$$
\hat{g}_{i_{1}}= \cdots =\hat{g}_{i_{k}}= 1/z \quad (1\leq i_{1}< \cdots < i_{k}\leq n) 
$$
and the others are constant. We can easily show that the identity map $\Psi\colon \Sigma \to \Sigma$ is a conformal 
diffeomorphism and the metric $ds^{2}=\prod_{i=1}^{n}(1+|g_{i}|^{2})^{m_{i}}|\omega|^{2}$ is complete. 
Then for each $i_{l}$, the maps $g_{i_{l}}$ and $\hat{g}_{i_{l}}$ $(l=1, \cdots, k)$ share the $M+4$ distinct values 
$0, \infty, 1, -1, {\alpha}_{1}, \ldots, {\alpha}_{M/2}, 1/{\alpha}_{1}, \ldots, 1/{\alpha}_{M/2}$ and 
$g_{i_{l}} \not\equiv \hat{g}_{i_{l}}\circ \Psi$. These show that Theorem \ref{thm-main} is optimal. 
\end{example}

We will apply Theorem \ref{thm-main} to the Gauss maps of complete minimal surfaces in ${\R}^{4}$. 
We first recall some basic facts of minimal surfaces in ${\R}^{4}$. 
For more details, we refer the reader to \cite{Ch1965, HO1980, HO1985, Os1964}. 
Let $X=(x^{1}, x^{2}, x^{3}, x^{4})\colon \Sigma \to {\R}^{4}$ be an oriented minimal 
surface in ${\R}^4$. By associating a local complex coordinate $z=u+\sqrt{-1}v$ with each positive isothermal coordinate system 
$(u, v)$, $\Sigma$ is considered as a Riemann surface whose conformal metric is the induced metric $ds^{2}$ from ${\R}^{4}$. 
Then 
\begin{equation}\label{equ-appl-min-1}
\triangle_{ds^{2}} X=0
\end{equation}
holds, that is, each coordinate function $x^{i}$ is harmonic. With respect to the local coordinate $z$ of the surface, 
(\ref{equ-appl-min-1}) is given by 
$$
\bar{\partial} \partial X =0, 
$$
where $\partial =(\partial /\partial u - \sqrt{-1}\partial /\partial v)/2$, $\bar{\partial} 
=(\partial /\partial u + \sqrt{-1}\partial /\partial v)/2$. Hence each ${\phi}_{i}:= \partial x^{i} dz$ ($i=1, 2, 3, 4$) is a 
holomorphic $1$-form on $\Sigma$. If we set that 
$$
\omega = {\phi}_{1} -\sqrt{-1} {\phi}_{2}, \qquad g_{1}=\dfrac{{\phi}_{3}+\sqrt{-1}{\phi}_{4}}{{\phi}_{1} -\sqrt{-1} {\phi}_{2}}, 
\qquad g_{2}=\dfrac{-{\phi}_{3}+\sqrt{-1}{\phi}_{4}}{{\phi}_{1} -\sqrt{-1} {\phi}_{2}}, 
$$
then $\omega$ is a holomorphic $1$-form, and $g_{1}$ and $g_{2}$ are meromorphic functions on $\Sigma$. 
Moreover the holomorphic map $G:=(g_{1}, g_{2})\colon \Sigma \to \RC \times \RC$ coincides with the Gauss map of $X(\Sigma)$. 
We remark that the Gauss map of $X(\Sigma)$ in ${\R}^{4}$ is the map from each point of $\Sigma$ to its oriented tangent plane, 
the set of all oriented (tangent) planes in ${\R}^{4}$ is naturally identified with the quadric 
$$
\mathbf{Q}^{2}(\C) =\{[w^{1}: w^{2}: w^{3}: w^{4}] \in \mathbf{P}^{3}(\C) \, ;\, (w^{1})^{2}+\cdots +(w^{4})^{2} = 0\}
$$
in $\mathbf{P}^{3}(\C)$, and $\mathbf{Q}^{2}(\C)$ is biholomorphic to the product of the Riemann spheres $\RC \times \RC$. 
Furthermore the induced metric from ${\R}^{4}$ is given by 
\begin{equation}\label{equ-appl-min-2}
ds^{2}= (1+|g_{1}|^{2})(1+|g_{2}|^{2})|\omega|^{2}. 
\end{equation}

Applying Theorem \ref{thm-main} to the induced metric, we obtain Theorem \ref{thm-cor}.  

\begin{proof}[{\it Proof of Theorem \ref{thm-cor}}] 
We first show the case (i). Since $m_{1}=m_{2}=1$ from (\ref{equ-appl-min-2}), 
we can prove the inequality (\ref{eq-2-2}) by Theorem \ref{thm-main}. Next we show the case (i\hspace{-.1em}i). 
By Theorem \ref{thm-main}, we obtain 
$$
\dfrac{1}{p-4}\geq 1. 
$$
Thus we have $p\leq 4+1=5$. 
\end{proof}

\begin{remark}\label{rmk-R-m}
Fujimoto \cite{Fu1993-3} obtained the result of the unicity theorem for the Gauss maps $G\colon \Sigma \to {\mathbf{P}}^{m-1}(\C)$ 
of complete minimal surfaces in ${\R}^{m}$ $(m\geq 3)$. However the result does not contain Theorem \ref{thm-cor} because 
corresponding hyperplanes in ${\mathbf{P}}^{3}(\C)$ are not necessary located in general position (For more details, 
see \cite[Page 353]{MO1990}). 

\end{remark}

\section{Proof of Theorem \ref{thm-main}}\label{Sec-Proof}


We first recall the notion of chordal distance between two distinct points in $\RC$. 
For two distinct points $\alpha$, $\beta\in \RC$, we set 
$$
|\alpha, \beta|:= \dfrac{|\alpha -\beta|}{\sqrt{1+|\alpha|^{2}}\sqrt{1+|\beta|^{2}}}
$$
if $\alpha \not= \infty$ and $\beta \not= \infty$, and $|\alpha, \infty|=|\infty, \alpha| := 1/\sqrt{1+|\alpha|^{2}}$. 
We note that, if we take $v_{1}$, $v_{2}\in {\Si}^{2}$ with $\alpha =\varpi (v_{1})$ and $\beta = \varpi (v_{2})$, 
we have that $|\alpha, \beta|$ is a half of the chordal distance between $v_{1}$ and $v_{2}$, 
where $\varpi$ denotes the stereographic projection of the $2$-sphere ${\Si}^{2}$ onto $\RC$. 


We next review the following three lemmas used in the proof of Theorem \ref{thm-main}. 
\begin{lemma}[{\cite[Proposition 2.1]{Fu1993-2}}]\label{main-lem1}
Let $g_{i_{l}}$ and $\hat{g}_{i_{l}}$ be mutually distinct nonconstant meromorphic functions on a Riemann surface $\Sigma$. 
Let $q_{i_{l}}$ be a positive integer and ${\alpha}^{l}_{1}, \ldots, {\alpha}^{l}_{q_{i_{l}}}\in \RC$ be distinct. Suppose that 
$q_{i_{l}}>4$ and $g_{i_{l}}^{-1}({\alpha}^{l}_{j})={\hat{g}_{i_{l}}}^{-1}({\alpha}^{l}_{j})$ $(1\leq j\leq q_{i_{l}})$. 
For $a^{l}_{0}>0$ and $\varepsilon$ with $q_{i_{l}}-4> q_{i_{l}}\varepsilon >0$, we set that 
$$
\xi_{i_{l}} :=\displaystyle \biggl{(}\prod_{j=1}^{q_{i_{l}}} |g_{i_{l}}, {\alpha}^{l}_{j}| 
\log{\biggl{(}\dfrac{a^{l}_{0}}{|g_{i_{l}}, {\alpha}^{l}_{j}|^{2}} \biggr{)}}\biggr{)}^{-1+\varepsilon}, 
\quad \hat{\xi}_{i_{l}} := \displaystyle \biggl{(}\prod_{j=1}^{q_{i_{l}}} |\hat{g}_{i_{l}}, {\alpha}^{l}_{j}| \log{\biggl{(}
\dfrac{a^{l}_{0}}{|\hat{g}_{i_{l}}, {\alpha}^{l}_{j}|^{2}} \biggr{)}}\biggr{)}^{-1+\varepsilon}, 
$$
and define that 
\begin{equation}\label{equ-proof-2-1}
d\tau^{2}_{i_{l}}:=\biggl{(}|g_{i_{l}}, \hat{g}_{i_{l}}|^{2}\xi_{i_{l}}\, \hat{\xi}_{i_{l}}
\dfrac{|g_{i_{l}}'|}{1+|g_{i_{l}}|^{2}}\dfrac{|\hat{g}'_{i_{l}}|}{1+|\hat{g}_{i_{l}}|^{2}}\biggr{)}|dz|^{2}
\end{equation}
outside the set $E:=\bigcup_{j=1}^{q}g_{i_{l}}^{-1}({\alpha}^{l}_{j})$ and $d\tau^{2}_{i_{l}}=0$ on $E$. 
Then for a suitably chosen $a_{0}$, $d\tau^{2}_{i_{l}}$ is continuous on $\Sigma$ and has strictly negative curvature on 
the set $\{d\tau^{2}_{i_{l}}\not= 0\}$. 
\end{lemma}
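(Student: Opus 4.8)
The plan is to establish that the metric $d\tau_{i_l}^2$ defined in (\ref{equ-proof-2-1}) is continuous on $\Sigma$ and has strictly negative curvature where it does not vanish. The key structural observation is that $d\tau_{i_l}^2$ is built as a product: a factor $|g_{i_l},\hat g_{i_l}|^2$ measuring the chordal separation of the two maps, two weight factors $\xi_{i_l}, \hat\xi_{i_l}$ involving the distances to the shared values, and the pullbacks of the spherical metrics via $g_{i_l}$ and $\hat g_{i_l}$. The curvature of a product metric $e^{2u}|dz|^2$ is $-e^{-2u}\Delta u$, so I would compute $\log(d\tau_{i_l}^2/|dz|^2)$ as a sum of logarithms and apply the Laplacian term by term, using that $\log|h|$ is harmonic away from zeros and poles of a meromorphic function $h$.

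First I would verify continuity. On the set $E=\bigcup_j g_{i_l}^{-1}(\alpha_j^l)$ the metric is declared to vanish, so the task is to check that $d\tau_{i_l}^2\to 0$ as $z$ approaches a point of $E$. Near such a point the factor $|g_{i_l},\alpha_j^l|\to 0$, and because the exponent in $\xi_{i_l}$ is $-1+\varepsilon<0$, this factor pushes $\xi_{i_l}$ toward $+\infty$; the crucial point is that the accompanying logarithm $\log(a_0^l/|g_{i_l},\alpha_j^l|^2)$ and the remaining analytic factors are controlled so that the combined exponent, after accounting for the value-sharing hypothesis $g_{i_l}^{-1}(\alpha_j^l)=\hat g_{i_l}^{-1}(\alpha_j^l)$, yields a net positive power of the local vanishing. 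The sharing hypothesis is what forces the zeros of $g_{i_l}-\alpha_j^l$ and $\hat g_{i_l}-\alpha_j^l$ to coincide, so both $|g_{i_l},\alpha_j^l|$ and $|\hat g_{i_l},\alpha_j^l|$ vanish simultaneously, guaranteeing the product stays bounded and in fact tends to $0$.

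Next I would compute the Gaussian curvature. Writing $d\tau_{i_l}^2=e^{2u}|dz|^2$, the Laplacian $\Delta u$ splits into contributions from each factor. The pullback terms $\frac{|g_{i_l}'|}{1+|g_{i_l}|^2}$ contribute the (nonnegative) pullback of the spherical curvature form, and similarly for $\hat g_{i_l}$. The chordal factor $|g_{i_l},\hat g_{i_l}|^2$ and the weight factors $\xi_{i_l},\hat\xi_{i_l}$ contribute further Laplacian terms; the choice of the exponent $-1+\varepsilon$ and the logarithmic correction are engineered precisely so that the dominant positive contribution comes from the spherical pullback terms, making $\Delta u$ strictly positive and hence the curvature strictly negative. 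The role of the parameter $\varepsilon$ with $q_{i_l}-4>q_{i_l}\varepsilon>0$ is to keep this curvature estimate uniform while leaving enough room in the exponent for the continuity argument; this is the quantitative heart of the lemma and follows the computation in \cite[Proposition 2.1]{Fu1993-2}.

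The main obstacle will be the bookkeeping at the shared-value points, where several factors simultaneously degenerate: one must track the precise orders of vanishing of $|g_{i_l},\alpha_j^l|$ and $|\hat g_{i_l},\alpha_j^l|$ and of the logarithmic terms, and confirm that the singular behaviour of the curvature integrand remains integrable and sign-definite after all cancellations. Choosing $a_0^l$ large enough to absorb the constants so that the logarithms stay bounded away from zero on the relevant region is the delicate step that makes both continuity and the curvature sign work at once. Since this is exactly \cite[Proposition 2.1]{Fu1993-2}, I would present the construction and the key estimates and refer to Fujimoto's paper for the detailed verification of the curvature computation.
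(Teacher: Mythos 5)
This lemma is not proved in the paper at all: it is imported verbatim as \cite[Proposition 2.1]{Fu1993-2}, so your decision to sketch the construction and defer the detailed verification to Fujimoto is consistent with how the paper treats it, and your overall plan (check continuity at the shared-value set using the simultaneous vanishing forced by $g_{i_l}^{-1}(\alpha^l_j)=\hat g_{i_l}^{-1}(\alpha^l_j)$, then compute the curvature of $e^{2u}|dz|^2$ as $-e^{-2u}\Delta u$ term by term) is the right skeleton.

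However, one step of your sketch points in the wrong direction and would fail if carried out literally: you assert that ``the dominant positive contribution'' to $\Delta u$ ``comes from the spherical pullback terms.'' It does not. The factors $|g_{i_l}'|/(1+|g_{i_l}|^2)$ and $|\hat g_{i_l}'|/(1+|\hat g_{i_l}|^2)$ pull back the \emph{positively} curved spherical metric, so each contributes $-4|g_{i_l}'|^2/(1+|g_{i_l}|^2)^2$ (resp.\ the hatted analogue) to the Laplacian of the log-density, and the chordal factor $|g_{i_l},\hat g_{i_l}|^2$ contributes another $-4$ of each; these are all \emph{negative}. The strictly positive contribution is supplied entirely by the weights $\xi_{i_l},\hat\xi_{i_l}$: since $\Delta\log|g_{i_l},\alpha^l_j|=-2|g_{i_l}'|^2/(1+|g_{i_l}|^2)^2$ off the preimages, the exponent $-(1-\varepsilon)$ applied to the product over the $q_{i_l}$ values yields $+2q_{i_l}(1-\varepsilon)|g_{i_l}'|^2/(1+|g_{i_l}|^2)^2$, plus the (also positive) log-log corrections. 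The net coefficient is $2q_{i_l}(1-\varepsilon)-8=2(q_{i_l}-q_{i_l}\varepsilon-4)$, and it is precisely the hypothesis $q_{i_l}-4>q_{i_l}\varepsilon$ that makes this positive; your sketch never identifies where $q_{i_l}>4$ enters, and this is its only point of entry. For the continuity claim your reasoning is sound but should be made quantitative: if $g_{i_l}-\alpha^l_j$ and $\hat g_{i_l}-\alpha^l_j$ vanish to orders $m$ and $\hat m$ at a point of $E$, the density vanishes to order at least $2\min(m,\hat m)-2+\varepsilon(m+\hat m)>0$, which is where the exponent $-1+\varepsilon$ (rather than $-1$) and the sharing hypothesis are both indispensable.
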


\begin{lemma}[{\cite[Corollary 2.4]{Fu1993-2}}]\label{main-lem2}
Let $g_{i_{l}}$ and $\hat{g}_{i_{l}}$ be meromorphic functions on ${\triangle}_{R}$ 
satisfying the same assumption as in Lemma \ref{main-lem1}. 
Then for the metric $d\tau^{2}$ defined by (\ref{equ-proof-2-1}), there exists a constant $C>0$ such that 
$$
d\tau^{2}_{i_{l}}\leq C\dfrac{R^{2}}{(R^{2}-|z|^{2})^{2}}|dz|^{2}. 
$$
\end{lemma}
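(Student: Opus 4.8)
The estimate is a Schwarz-type domination: it asserts that the pseudometric $d\tau^2_{i_l}$ on $\triangle_R$, which Lemma \ref{main-lem1} has endowed with strictly negative curvature, is controlled by the Poincar\'e metric of $\triangle_R$. The plan is therefore to deduce it from the generalized Schwarz (Ahlfors--Schwarz) lemma. The first thing I would note is that the density on the right-hand side is, up to the constant $C$, the complete metric of constant curvature $-1$ on the disk: writing $\rho_R:=2R/(R^2-|z|^2)$ one has $\rho_R^2=4R^2/(R^2-|z|^2)^2$, and the metric $\rho_R^2|dz|^2$ has Gaussian curvature $-1$. Thus the claim is exactly that $d\tau^2_{i_l}$ is dominated by this Poincar\'e metric after a harmless rescaling absorbed into $C$.

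First I would record the quantitative content of Lemma \ref{main-lem1}: for the suitably chosen $a_0$ its proof yields a constant $c>0$ such that the Gaussian curvature of $d\tau^2_{i_l}$ is $\leq -c$ on $\{d\tau^2_{i_l}\neq 0\}$. Writing $d\tau^2_{i_l}=\lambda^2|dz|^2$ and letting $\Delta:=4\partial\bar\partial$ be the flat Laplacian (with $\partial,\bar\partial$ as in Section \ref{Sec-Main}), and using that the curvature of $\lambda^2|dz|^2$ equals $-\lambda^{-2}\Delta\log\lambda$, this reads $\Delta\log\lambda\geq c\,\lambda^2$ wherever $\lambda>0$. Now I would run the maximum-principle comparison. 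For each $r\in(0,R)$ let $\rho_r:=2r/(r^2-|z|^2)$ be the curvature $-1$ Poincar\'e density of $\triangle_r$, so that $\Delta\log\rho_r=\rho_r^2$, and set $u:=c\,\lambda^2/\rho_r^2$ on $\triangle_r$. Since $\rho_r\to+\infty$ as $|z|\to r$ while $\lambda$ stays bounded on the compact closure $\overline{\triangle_r}\subset\triangle_R$ by continuity, the function $u$ extends continuously with boundary value $0$ and hence attains its maximum at an interior point $z_0$. If that maximum is $0$ the bound is trivial; otherwise $\lambda(z_0)>0$, so $\log u$ has an interior maximum at $z_0$ and $\Delta\log u(z_0)\leq 0$. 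Combining $\Delta\log u=2\Delta\log\lambda-2\Delta\log\rho_r\geq 2c\lambda^2-2\rho_r^2$ with this inequality at $z_0$ gives $c\lambda^2(z_0)\leq\rho_r^2(z_0)$, i.e. $u(z_0)\leq 1$; as $z_0$ is the maximum, $u\leq 1$ throughout $\triangle_r$. Letting $r\uparrow R$ then yields $c\,\lambda^2\leq 4R^2/(R^2-|z|^2)^2$ on $\triangle_R$, which is the assertion with $C=4/c$.

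The main obstacle is the degeneracy of $d\tau^2_{i_l}$ on the exceptional set $E=\bigcup_j g_{i_l}^{-1}(\alpha^l_j)$, where $\lambda$ vanishes and $\log\lambda=-\infty$, so that the differential inequality for $\log\lambda$ cannot be invoked there directly. The device that removes this difficulty is precisely to compare the \emph{ratio} $u$ rather than $\log u$: since $u=0$ on $E$, a positive maximum of $u$ is automatically attained off $E$, where the logarithmic computation above is legitimate. The only other point demanding care is the passage from $\triangle_r$ to $\triangle_R$, which is forced by the non-compactness of $\triangle_R$ (one cannot control $\lambda$ near $\partial\triangle_R$ a priori) and is exactly what makes $u$ vanish on the boundary of each $\triangle_r$; and, at the outset, verifying that the construction in Lemma \ref{main-lem1} truly delivers the \emph{uniform} bound curvature $\leq -c$, since it is this uniform negativity, not pointwise strict negativity alone, that powers the estimate.
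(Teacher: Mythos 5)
Your argument is correct and follows essentially the same route as the paper's source: the paper gives no proof of Lemma \ref{main-lem2} but quotes Fujimoto's Corollary 2.4, which is obtained exactly as you do, by applying the generalized Schwarz (Ahlfors--Schwarz) lemma to the pseudo-metric of Lemma \ref{main-lem1} --- and the uniform bound you rightly flag is not a gap, since in Fujimoto's terminology ``strictly negative curvature'' means precisely curvature bounded above by a negative constant, which his construction delivers. Your maximum-principle comparison on ${\triangle}_{r}$ with $r\uparrow R$, using the ratio $u=c\lambda^{2}/\rho_{r}^{2}$ to neutralize the degeneracy on $E$, is just an inlined proof of that Schwarz lemma, so the two approaches are essentially identical.
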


\begin{lemma}{\cite[Lemma 1.6.7]{Fu1993}}\label{Lem-main2} 
Let $d{\sigma}^{2}$ be a conformal flat-metric on an open Riemann surface $\Sigma$. 
Then, for each point $p\in \Sigma$, there exists a local diffeomorphism $\Phi$ of a 
disk ${\Delta}_{R}=\{z\in \C ; |z|< R \}$ $(0<R\leq +\infty)$ onto an open 
neighborhood of $p$ with $\Phi (0)=p$ such that $\Phi$ is an isometry, that is, 
the pull-back ${\Phi}^{\ast}(d{\sigma}^{2})$ is equal to the standard Euclidean metric $ds^{2}_{E}$ on ${\Delta}_{R}$ 
and that, for a specific point $a_{0}$ with $|a_{0}|=1$, the ${\Phi}$-image ${\Gamma}_{a_{0}}$ of 
the curve $L_{a_{0}}=\{w:= a_{0}s ; 0 < s < R\}$ 
is divergent in $\Sigma$. 
\end{lemma}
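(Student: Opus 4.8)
The plan is to realize $\Phi$ as the exponential map of the flat metric $d\sigma^{2}$ at $p$ and to locate the divergent ray as a geodesic whose maximal interval of existence is as short as possible. Write $d\sigma^{2}=\lambda^{2}|dz|^{2}$ in a local coordinate; flatness means the Gaussian curvature $K\equiv 0$, equivalently $\triangle\log\lambda=0$, so $\log\lambda$ is locally the real part of a holomorphic function $h$, and $\zeta:=\int e^{h}\,dz$ is a local biholomorphism with $d\sigma^{2}=|d\zeta|^{2}$; its inverse is a local isometry onto a Euclidean disk. Invariantly this says that in geodesic normal coordinates $d\sigma^{2}=dr^{2}+r^{2}\,d\theta^{2}$, i.e. the exponential map $\exp_{p}\colon T_{p}\Sigma\cong\C\to\Sigma$ pulls $d\sigma^{2}$ back to the standard Euclidean metric $ds^{2}_{E}$ wherever it is defined. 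Since $K\equiv 0$ there are no conjugate points, so $d(\exp_{p})$ is everywhere nonsingular and $\exp_{p}$ is a local diffeomorphism; radial segments $L_{a_{0}}$ map to unit-speed geodesics $\Gamma_{a_{0}}(s)=\exp_{p}(sa_{0})$. This supplies the isometry $\Phi$ and the curves $\Gamma_{a_{0}}$ of the statement, and it is precisely the real form of Fujimoto's holomorphic developing map $\zeta$.

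Next I would fix the radius $R$. For each unit vector $a_{0}$ let $R(a_{0})\in(0,+\infty]$ be the maximal existence time of the geodesic $s\mapsto\exp_{p}(sa_{0})$, and set $R:=\inf_{|a_{0}|=1}R(a_{0})$. Standard ODE theory (openness of the domain of the geodesic flow) gives that $R(\cdot)$ is lower semicontinuous on the compact circle $S^{1}$, so it attains its infimum at some $a_{0}$ (or $R=+\infty$, when every geodesic from $p$ is complete). For every $w\in\Delta_{R}$ the point $\exp_{p}(w)$ is defined because $|w|<R\le R(w/|w|)$; hence $\Phi:=\exp_{p}|_{\Delta_{R}}$ is a well-defined local diffeomorphism of $\Delta_{R}$ onto the open neighborhood $\Phi(\Delta_{R})$ of $p=\Phi(0)$, and it is an isometry in the sense $\Phi^{*}(d\sigma^{2})=ds^{2}_{E}$, exactly as required. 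By construction each $\Gamma_{a_{0}}$ is the radial geodesic of length $R$.

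It remains to exhibit a divergent ray. If $R<+\infty$, take the direction $a_{0}$ at which the infimum is attained, so $R(a_{0})=R<\infty$. Then the escape lemma for geodesics applies: were $\Gamma_{a_{0}}([0,R))$ to meet a fixed compact set $K$ along a sequence $s_{n}\to R$, a subsequential limit $q\in K$ together with the uniform lower bound for geodesic existence times near $q$ would let one continue the geodesic past time $R$, contradicting the maximality of $R(a_{0})$; hence $\Gamma_{a_{0}}$ eventually leaves every compact subset of $\Sigma$, i.e. it is divergent. The case $R=+\infty$ is the one I expect to be the main obstacle, since then no maximality is available: here $\exp_{p}$ is defined on all of $\C$, so by Hopf--Rinow $(\Sigma,d\sigma^{2})$ is complete, while $\Sigma$ is a noncompact (open) Riemann surface. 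In a complete noncompact surface one produces a ray by choosing $q_{j}$ with $d_{\sigma}(p,q_{j})\to\infty$, joining them to $p$ by minimizing unit-speed geodesics, and extracting a limit $a_{0}$ of their initial directions; the resulting $\Gamma_{a_{0}}$ is minimizing on every subinterval, so $d_{\sigma}(p,\Gamma_{a_{0}}(s))=s\to\infty$ and $\Gamma_{a_{0}}$ cannot remain in any compact set, hence is divergent. In either case the required $a_{0}$ exists, and the only delicate points are the lower semicontinuity and attainment of $R(\cdot)$ and the completeness (Hopf--Rinow) step when $R=+\infty$.
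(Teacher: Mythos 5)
Your proof is correct, but note that the paper itself gives no proof of this statement: it is quoted verbatim from Fujimoto's book \cite[Lemma 1.6.7]{Fu1993}, so the comparison is with Fujimoto's original argument rather than with anything in this paper. Fujimoto works holomorphically: flatness gives local isometries onto plane domains via the developing map $\zeta=\int e^{h}\,dz$ (exactly your opening observation), he continues the germ at $p$ of the inverse developing map to larger and larger concentric disks, takes $R$ to be the supremal radius of continuation, and extracts the divergent direction from maximality --- if the image of every radial segment accumulated in $\Sigma$, the local isometries around the accumulation points would let him continue $\Phi$ past the boundary circle, enlarging $R$. You replace this continuation argument by $\Phi:=\exp_{p}$ with $R:=\inf_{|a_{0}|=1}R(a_{0})$: the Euclidean pullback comes from the Jacobi equation with $K\equiv 0$ (Gauss lemma), lower semicontinuity of the blow-up time plus compactness of the circle yields an extremal direction, the escape lemma handles $R<+\infty$, and Hopf--Rinow plus the standard ray construction handles $R=+\infty$; each of these invocations is accurate, and the case split mirrors the one in Fujimoto's proof. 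The two routes are the same in spirit (a maximal radius, with divergence forced by maximality), and what yours buys is a self-contained real Riemannian argument from textbook facts, at the price of one small point worth a line: the application of the lemma in Section 3 of the paper tacitly uses that $\Phi$ is holomorphic (the functions $g_{i_{l}}\circ\Phi$ are differentiated as meromorphic functions of $z$), which Fujimoto's construction gives for free, whereas $\exp_{p}$ is a priori only a smooth isometric local diffeomorphism; one recovers holomorphy by noting that an orientation-preserving local isometry between flat conformal metrics is conformal, hence holomorphic, after identifying $T_{p}\Sigma$ with $\mathbf{C}$ compatibly with the orientation.
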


\begin{proof}[{\it Proof of Theorem \ref{thm-main}}] 
Since the given map $\Psi$ gives a biholomorphic isomorphism between $\Sigma$ and 
$\widehat{\Sigma}$, we denote the function $\hat{g}_{i_{l}}\circ \Psi$ by 
$\hat{g}_{i_{l}}$ $(l =1, \cdots, k)$ for brevity. For each $i_{l}$, we assume that 
$g_{i_{l}}$ and $\hat{g}_{i_{l}}$ share the $q_{i_{l}}$ distinct values ${\alpha}^{l}_{1}, \ldots, {\alpha}^{l}_{q_{i_{l}}}$. 
After suitable M\"obius transformations for $g_{i_{l}}$ and $\hat{g}_{i_{l}}$, 
we may assume that ${\alpha}_{q_{i_{1}}}^{1}=\cdots ={\alpha}_{q_{i_{k}}}^{k}=\infty$. 
Moreover we assume that either $ds^{2}$ or $d\hat{s}^{2}$, say $ds^{2}$ is complete and 
$g_{i_{l}}\not\equiv \hat{g}_{i_{l}}\circ \Psi$ for each $l$ $(1\leq l\leq k)$. 
Thus, for each local complex coordinate $z$ defined on a simply connected open domain 
$U$, we can find a nonzero holomorphic function $h_{z}$ such that 
\begin{equation}\label{eq-proof-2-2}
\displaystyle ds^{2}=|h_{z}|^{2}\prod_{i=1}^{n}(1+|g_{i}|^{2})^{m_{i}/2}(1+|\hat{g}_{i}|^{2})^{m_{i}/2}|dz|^{2}. 
\end{equation} 
Suppose that each $q_{i_{l}}> 4$ and 
\begin{equation}\label{eq-proof-2-3}
\displaystyle \sum_{l=1}^{k} \dfrac{m_{i_{l}}}{q_{i_{l}}-4}< 1. 
\end{equation}
Then, by (\ref{eq-proof-2-3}), we may suppose that $q_{i_{l}}> m_{i_{l}}+4$ for each $i_{l}$ $(l=1, \cdots, k)$. 
Taking some positive number $\eta_0$ with
\begin{equation}\label{eq:1}
0< \eta_0 < \dfrac{q_{i_{l}}-4-m_{i_{l}}}{q_{i_{l}}}
\end{equation}
for each $i_{l}$ $(l=1,\cdots, k)$ and
\begin{equation}\label{eq:2}
\displaystyle {\Lambda}_{0}:= \sum_{l=1}^{k} \dfrac{m_{i_{l}}}{q_{i_{l}}-4-q_{i_{l}}\eta_0}=1. 
\end{equation}
For a positive number $\eta$ with $\eta < {\eta}_{0}$, we set 
$$
{\lambda}_{i_{l}}:= \dfrac{m_{i_{l}}}{q_{i_{l}}-4-q_{i_{l}}\eta}\quad (l=1, \cdots, k). 
$$
By (\ref{eq:2}) we get
\begin{equation}\label{eq-proof-2-5}
\Lambda :=\displaystyle \sum_{l=1}^{k} {\lambda}_{i_{l}} = \sum_{l=1}^{k} 
\dfrac{m_{i_{l}}}{q_{i_{l}}-4-q_{i_{l}}\eta}<\sum_{l=1}^{k} \dfrac{m_{i_{l}}}{q_{i_{l}}-4-q_{i_{l}}\eta_0}={\Lambda}_{0}= 1.
\end{equation}
Now we can choose a positive number $\eta (< \eta_0)$ sufficiently near $\eta_0$ satisfying 
\begin{equation}\label{eq:3}
{\Lambda}_{0}-{\Lambda} <\min_{1\leq t\leq k} \biggl{\{}\dfrac{m_{i_{t}}}{q_{i_{t}}-4-q_{i_{t}}\eta},\, \dfrac{m_{i_{t}}\eta}{q_{i_{t}}-4-q_{i_{t}}\eta}\biggr{\}}.
\end{equation}
Using (\ref{eq-proof-2-5}) and (\ref{eq:3}), we have
\begin{equation}\label{eq-proof-2-6}
\dfrac{{\lambda}_{i_{l}}}{1-\Lambda}> 1 \quad \text{ and } \quad \dfrac{\eta{\lambda}_{i_{l}}}{1-\Lambda}> 1 \quad (l=1, \cdots, k).  
\end{equation}

Now we define a new metric 
\begin{equation}\label{eq-proof-2-7}
\displaystyle d{\sigma}^{2}= |h_{z}|^{\frac{4}{1-\Lambda}}\prod_{l=1}^{k}
\Biggl{(}\dfrac{\prod_{j=1}^{q_{i_{l}}-1}(|g_{i_{l}}-{\alpha}^{l}_{j}| |\hat{g}_{i_{l}}-{\alpha}^{l}_{j}|)^{1-\eta}}{|g_{i_{l}}-\hat{g}_{i_{l}}|^{2}|g'_{i_{l}}||\hat{g}'_{i_{l}}|
\prod_{j=1}^{q_{i_{l }-1}} (1+|\alpha_{j}^{l}|^{2})^{1-\eta}} \Biggr{)}^{\frac{2{\lambda}_{i_{l}}}{1-\Lambda}}|dz|^{2} 
\end{equation}
on the set ${\Sigma}'=\Sigma \backslash E$, where 
$$
E=\{p\in \Sigma \, ; \, g'_{i_{l}}(p)= 0,\,\,  \hat{g}'_{i_{l}}(p)= 0 \,\, \text{or} \,\, g_{i_{l}}(p) (=\hat{g}_{i_{l}}(p) )= {\alpha}^{l}_{j} \,\, \text{for some}\,\, l\}. 
$$
On the other hand, setting $\varepsilon :=\eta /2$, we can define another pseudo-metric $d{\tau}^{2}_{i_{l}}$ on $\Sigma$ given by (\ref{equ-proof-2-1}) 
for each $l$, which has strictly negative curvature on ${\Sigma}'$. Take a point $p\in {\Sigma}'$. Since the metric $d{\sigma}^{2}$ is flat on ${\Sigma}'$, 
by Lemma \ref{Lem-main2}, there exists an isometry $\Phi$ satisfying $\Phi (0)= p$ from a disk $\triangle_{R}=\{z\in \C \, ; \, |z|< R\}$ $(0< R\leq +\infty)$ with 
the standard metric $ds^{2}_{E}$ on an open neighborhood of $p$ in ${\Sigma}'$ with the metric $d{\sigma}^{2}$. We denote the functions $g_{i_{l}}\circ \Phi$ and 
 $\hat{g}_{i_{l}}\circ \Phi (=\hat{g}_{i_{l}}\circ \Psi \circ \Phi)$ by $g_{i_{l}}$ and $\hat{g}_{i_{l}}$ respectively $(l=1, \cdots, k)$ in the following. 
Moreover, for each $i_{l}$, the pseudo-metric $d{\sigma}^{2}_{i_{l}}$ on $\triangle_{R}$ has strictly negative curvature. Since there exists no metric with 
strictly negative curvature on $\C$ (see \cite[Corollary 4.2.4]{Fu1993}),  we get that the radius $R$ is finite. Furthermore, by Lemma \ref{Lem-main2}, 
we can choose a point $a_{0}$ with $|a_{0}|=1$ such that, for the line segment $L_{a_{0}}:=\{w := a_{0}s \, ;\, 0<s<R \}$, 
the $\Phi$-image $\Gamma_{a_{0}}$ tends 
to the boundary of ${\Sigma}'$ as $s$ tends to $R$. Then ${\Gamma}_{a_{0}}$ is divergent in $\Sigma$. 
Indeed, if not, then ${\Gamma}_{a_{0}}$ must tend to a point $p_{0}\in E$. 
Then we consider the following two possible cases: 

Case 1:\, $g_{i_{l}}(p_{0}) (=\hat{g}_{i_{l}}(p_{0}) )= {\alpha}^{l}_{j} \,\, \text{for some}\,\, l.$\\
Since $g'_{i_{l}}(p_0) = (g_{i_{l}}-\alpha^{l}_{j})'(p_0)$ and 
$\hat{g}'_{i_{l}}(p_0) = (\hat{g}_{i_{l}}-\alpha^{l}_{j})'(p_0)$, the function 
$$
 |h_{z}|^{\frac{2}{1-\Lambda}}\prod_{l=1}^{k}
\Biggl{(}\dfrac{\prod_{j=1}^{q_{i_{l}}-1}(|g_{i_{l}}-{\alpha}^{l}_{j}| |\hat{g}_{i_{l}}-{\alpha}^{l}_{j}|)^{1-\eta}}{|g_{i_{l}}-\hat{g}_{i_{l}}|^{2}|g'_{i_{l}}||\hat{g}'_{i_{l}}|
\prod_{j=1}^{q_{i_{l }-1}} (1+|\alpha_{j}^{l}|^{2})^{1-\eta}} \Biggr{)}^{\frac{{\lambda}_{i_{l}}}{1-\Lambda}}
$$ 
has a pole of order at least $2\eta{\lambda}_{i_{l}}/(1-\Lambda)$ at $p_{0}$.  
Taking a local complex coordinate $\zeta$ in a neighborhood of $p_{0}$ with $\zeta (p_{0}) =0$, 
we can write the metric $d{\sigma}^{2}$ as 
$$
d{\sigma}^{2}= |\zeta|^{-4\eta{\lambda}_{i_{l}}/(1-\Lambda)} w|d\zeta|^{2} 
$$
with some positive function $w$. Since $\eta{\lambda}_{i_{l}}/(1-\Lambda) >1$, we have  
$$
R = \int_{{\Gamma}_{a_{0}}} d\sigma > C_{1}\int_{\Gamma_{a_{0}}} |d\zeta|/ |\zeta|^{2\eta{\lambda}_{i_{l}}/(1-\Lambda)} = +\infty. 
$$
This contradicts that $R$ is finite. 

Case 2:\, $g'_{i_{l}}(p_{0})\hat{g}'_{i_{l}}(p_{0})= 0$ for some $i_{l}.$ \\
Without loss of generality, we may assume that $g'_{i_{l}}(p_{0})= 0$ for some $i_{l}$. 
Taking a local complex coordinate $\zeta :=g'_{i_{l}}$ in a neighborhood 
of $p_{0}$ with $\zeta (p_{0}) =0$, we can write the metric $d{\sigma}^{2}$ as 
$$
d{\sigma}^{2}= |\zeta|^{-2{\lambda}_{i_{l}}/(1-\Lambda)} w|d\zeta|^{2} 
$$
with some positive function $w$. Since ${\lambda}_{i_{l}}/(1-\Lambda) >1$, we have  
$$
R = \int_{{\Gamma}_{a_{0}}} d\sigma > C_{2}\int_{\Gamma_{a_{0}}} |d\zeta|/ |\zeta|^{{\lambda}_{i_{l}}/(1-\Lambda)} = +\infty. 
$$
This also contradicts that $R$ is finite. 

Since ${\Phi}^{\ast} d{\sigma}^{2} =|dz|^{2}$, we get by (\ref{eq-proof-2-7}) that 
$$
|h_{z}|^{2}= \displaystyle \prod_{l=1}^{k} \Biggl{(}\dfrac{|g_{i_{l}}-\hat{g}_{i_{l}}|^{2} |g'_{i_{l}}| |\hat{g}'_{i_{l}}|\prod_{j=1}^{q_{i_{l}}-1}
(1+|{\alpha}^{l}_{j}|^{2})^{1-\eta}}{\prod_{j=1}^{q_{i_{l}}-1}(|g_{i_{l}}-{\alpha}^{l}_{j}| |\hat{g}_{i_{l}}-{\alpha}^{l}_{j}|)^{1-\eta}} \Biggr{)}^{{\lambda}_{i_{l}}}. 
$$
By (\ref{eq-proof-2-2}), we have 
\begin{eqnarray*}
{\Phi}^{\ast} ds &=& |h_{z}|^{2} \prod_{i=1}^{n} (1+|g_{i}|^{2})^{m_{i}/2}(1+|\hat{g}_{i}|^{2})^{m_{i}/2}|dz|^{2} \\
                     &\leq& C_{3} \prod_{l=1}^{k} \Biggl{(}\dfrac{|g_{i_{l}}-\hat{g}_{i_{l}}|^{2} |g'_{i_{l}}| |\hat{g}'_{i_{l}}| 
(1+|g_{i_{l}}|^{2})^{m_{i_{l}}/2{\lambda}_{i_{l}}}  (1+|\hat{g}_{i_{l}}|^{2})^{m_{i_{l}}/2{\lambda}_{i_{l}}} \prod_{j=1}^{q_{i_{l}}-1}
(1+|{\alpha}^{l}_{j}|^{2})^{1-\eta}}{\prod_{j=1}^{q_{i_{l}}-1}(|g_{i_{l}}-{\alpha}^{l}_{j}| |\hat{g}_{i_{l}}-{\alpha}^{l}_{j}|)^{1-\eta}} \Biggr{)}^{{\lambda}_{i_{l}}}|dz|^{2} \\
                     &=& C_{3} \prod_{l=1}^{k} \Biggl{(}{\mu}_{i_{l}}^{2} \prod_{j=1}^{q_{i_{l}}} \biggl{(} |g_{i_{l}}, {\alpha}^{l}_{j}| 
|\hat{g}_{i_{l}}, {\alpha}^{l}_{j}| \biggr{)}^{\varepsilon} \biggl{(}\log \biggl{(} \dfrac{a^{l}_{0}}{|g_{i_{l}}, {\alpha}^{l}_{j}|} \biggr{)} 
\log \biggl{(} \dfrac{a^{l}_{0}}{|\hat{g}_{i_{l}}, {\alpha}^{l}_{j}|} \biggr{)} \biggr{)}^{1-\varepsilon} \Biggr{)}^{{\lambda}_{i_{l}}} |dz|^{2}, 
\end{eqnarray*}
where ${\mu}_{i_{l}}$ is the function with $d{\tau}^{2}_{i_{l}} ={\mu}^{2}_{i_{l}} |dz|^{2}$. Since the function 
$x^{\varepsilon} \log^{1-\varepsilon}(a^{l}_{0}/ x)$ $(0< x\leq 1)$ is bounded, we obtain that 
$$
ds^{2}\leq C_{4}\prod_{l=1}^{k} \Biggl{(}\dfrac{|g_{i_{l}}, \hat{g}_{i_{l}}|^{2} |g'_{i_{l}}| |\hat{g}'_{i_{l}}| {\xi}_{i_{l}} \hat{\xi}_{i_{l}} }{(1+|g_{i_{l}}|^{2})(1+|\hat{g}_{i_{l}}|^{2})  } \Biggr{)}^{{\lambda}_{i_{l}}} |dz|^{2}
$$
for some $C_{4}$. By Lemma \ref{main-lem2}, we have that 
$$
ds^{2}\leq C_{5} \prod_{l=1}^{k} \Biggl{(}\dfrac{R}{R^{2}-|z|^{2}} \Biggr{)}^{{\lambda}_{i_{l}}} |dz|^{2} = C_{5} \Biggl{(}\dfrac{R^{2}}{R^{2}-|z|^{2}} \Biggr{)}^{\Lambda}|dz|^{2}
$$
for some constant $C_{5}$. Thus we obtain that 
$$
\int_{{\Gamma}_{a_{0}}} ds\leq (C_{5})^{1/2}\int_{L_{a_{0}} }\Biggl{(}\dfrac{R^{2}}{R^{2}-|z|^{2}} \Biggr{)}^{\Lambda /2}|dz|< 
C_{6} \dfrac{R^{(2-\Lambda)/2}}{1-(\Lambda /2)}< +\infty
$$
because $0< \Lambda < 1$. However it contradicts the assumption that the metric $ds^{2}$ is complete. 
\end{proof}


\end{document}